\theoremstyle{plain}
\newtheorem{master}{Master}[section]
\newtheorem{prop}[master]{Proposition}
\newtheorem{thm}[master]{Theorem}
\newtheorem{fact}[master]{Fact}
\newtheorem{lem}[master]{Lemma}
\newtheorem{question}[master]{Question}
\newtheorem{claim}[master]{Claim}
\theoremstyle{definition}
\newtheorem{defin}[master]{Definition}
\theoremstyle{remark}
\numberwithin{equation}{section}
\newcommand{\Nat}{\mathbb{N}}
\begin{document}
\title{Scott rank of Polish metric spaces}
\author{Michal Doucha}
\address{Institute of Mathematics, Polish Academy of Sciences, Warsaw, Poland}
\email{m.doucha@post.cz}

\keywords{Scott rank, Polish metric space, infinitary logic}
\subjclass[2000]{03C07,03C75,03E15}
\thanks{The author was supported by funds allocated to the implementation of the international co-funded project in the years 2014-2018, 3038/7.PR/2014/2, and by the EU grant PCOFUND-GA-2012-600415.}
\begin{abstract}
Following the work of Friedman, Koerwien, Nies and Schlicht we positively answer their question whether the Scott rank of Polish metric spaces is countable.
\end{abstract}
\maketitle
\section*{Introduction}
The origins of infinitary logic go back to the 1960s when it emerged through the work of Carp, Scott, Morley, Lopez-Escobar among others. Let us highlight here mainly the work of Scott on countable structures (\cite{Sc}): \emph{Let $L$ be a countable language and $M$ a countable $L$-structure. Then there is a sentence $\varphi$ of the $L_{\omega_1,\omega}$-logic such that $M\models \varphi$, and if $N$ is another countable $L$-structure such that $N\models \varphi$, then $M\cong N$.}

Tightly connected is the notion of `Scott rank' that will be defined below.\\

Recently, by Friedman, Koerwien, Nies, Schlicht (see \cite{Ni} and \cite{FFKN}), the Scott analysis has been applied to the case of Polish metric spaces to better understand the equivalence relation of isometry between Polish metric spaces. We recall the result of Clemens, Gao and Kechris from \cite{CGK} where they prove that the isometry equivalence relation between Polish metric spaces is Borel bireducible with the universal orbit equivalence relation induced by a Polish action of a Polish group. In particular, it is an analytic non-Borel equivalence relation where every equivalence class is Borel. Since it is possible to consider metric spaces as relational structures in a first order countable language one can apply the Scott analysis there. Recall that metric space $(X,d)$ can be viewed as a structure in a countable relational language if we define binary relations $d_q$ and $d^q$ for every $q\in \mathbb{Q}^+$ and interpret $d_q(x,y)$ as $d(x,y)<q$ and $d^q(x,y)$ as $d(x,y)>q$.

Although Polish metric spaces are uncountable in general they share some properties with countable structures. For instance, if two Polish metric spaces are elementary equivalent in $L_{\infty \omega}$ logic (we refer to \cite{Ke} for any unexplained notion from infinitary logic) then they are isometric. However, there are some properties that distinguish countable structures from Polish metric spaces. For example, as was pointed out by Kechris, there is no Borel assignment giving to every Polish metric space $X$ an $L_{\omega_1 \omega}$ sentence $\varphi_X$ such that for any other Polish metric space $Y$ we have $Y\models \varphi_X$ iff $X\cong_{iso} Y$. Otherwise, one would get that the isometry equivalence relation between Polish metric spaces is classifiable by countable structures, which contradicts the result of Clemens, Gao and Kechris (\cite{CGK}).

Scott rank is the common measurement of model-theoretic complexity. All countable structures have countable Scott rank, whereas from the general theory one can deduce that every Polish metric space has a Scott rank of cardinality at most continuum. Friedman, Koerwien, Nies and Schlicht proved that there exist Polish metric spaces of arbitrarily high countable Scott rank. The following question of Nies was left open.
\bigskip

\begin{question}
Is the Scott rank of any Polish metric countable?
\end{question}
\bigskip

Here we answer the question in affirmative. In the following section we review all necessary notions from infinitary logic used in the paper. We refer to \cite{Ke} for a general reference on this subject. We also refer to \cite{Ho} for the connections of the Scott rank and the Ehrenfeucht-Fra\" iss\' e games and to \cite{Gao} for another explanation of Scott analysis connected with descriptive set theory and Polish metric spaces.
\section{Preliminaries}
\begin{defin}
Let $L$ be a countable relational language. Let $M$ be a structure of the language $L$. Let $(\vec{a},\vec{b})$ be a pair of (ordered) tuples of the same length from $M$. We write
\begin{enumerate}
\item $\vec{a}\equiv_0 \vec{b}$ if there is a partial isomorphism which maps $\vec{a}$ onto $\vec{b}$
\item If $\alpha$ is a limit ordinal then we write  $\vec{a}\equiv_\alpha \vec{b}$ if $\vec{a}\equiv_\beta \vec{b}$ for all $\beta<\alpha$.
\item Finally, if $\alpha=\beta+1$, for some ordinal $\beta$, then we write $\vec{a}\equiv_\alpha \vec{b}$ if for every $x_a,x_b\in M$ there are elements $y_b,y_a\in M$ such that $\vec{a}x_a\equiv_\beta \vec{b}y_b$ and $\vec{a}y_a\equiv_\beta \vec{b}x_b$.

\end{enumerate}
Now, for every such pair $(\vec{a},\vec{b})$ of tuples from $M$, define the Scott rank of $(\vec{a},\vec{b})$, $\mathrm{sr}((\vec{a},\vec{b}))$, as $-1$ if $\vec{a}\equiv_\alpha \vec{b}$ for every ordinal $\alpha$; otherwise, we define it as $\inf\{\alpha: \vec{a}\cancel{\equiv}_\alpha \vec{b}\}$.

Finally, we define the Scott rank of $M$, $\mathrm{sr}(M)$, as $\sup\{\mathrm{sr}(\vec{a},\vec{b})+1:(\vec{a},\vec{b})\text{ are tuples of the same arbitrarily large length from }M\}$.
\end{defin}
The following fact is folklore and easy to check.
\begin{fact}\label{basic}
Let $M$ be a structure in some countable language $L$. Then $\mathrm{sr}(M)<|M|^+$.
\end{fact}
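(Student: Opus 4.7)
My plan is to exploit that the back-and-forth relations $\equiv_\alpha$ form a nested, eventually constant sequence of equivalence relations on the finite tuples of $M$, and then to bound how soon constancy must set in by counting the pairs of tuples.

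Two standard auxiliary facts about $\equiv_\alpha$ will do the heavy lifting, each by transfinite induction. \textbf{Monotonicity:} if $\alpha\le\beta$, then $\vec{a}\equiv_\beta\vec{b}$ implies $\vec{a}\equiv_\alpha\vec{b}$. In the successor step I will invoke an auxiliary \emph{truncation lemma}, namely $\vec{a}\vec{c}\equiv_\gamma\vec{b}\vec{d}\Rightarrow\vec{a}\equiv_\gamma\vec{b}$, which is itself an easy induction on $\gamma$. \textbf{Stabilization:} if $\equiv_\alpha$ and $\equiv_{\alpha+1}$ agree on tuples of every finite length, then $\equiv_\alpha\,=\,\equiv_\beta$ for every $\beta\ge\alpha$. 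This is a short induction on $\beta$: at a successor $\gamma+1$ the defining clause for $\equiv_{\gamma+1}$ depends only on $\equiv_\gamma$ on tuples one longer, which by the inductive hypothesis equals $\equiv_\alpha$, so $\equiv_{\gamma+1}$ collapses to $\equiv_{\alpha+1}\,=\,\equiv_\alpha$.

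I will then claim that $(\equiv_\alpha)$ stabilizes at some $\alpha^\ast<|M|^+$. Suppose otherwise; then for every $\alpha<|M|^+$ we have $\equiv_\alpha\,\ne\,\equiv_{\alpha+1}$ (for if they agreed, stabilization would make the sequence constant thereafter), so I can pick a pair $p_\alpha=(\vec{a},\vec{b})$ of equal-length tuples lying in $\equiv_\alpha\setminus\equiv_{\alpha+1}$. The assignment $\alpha\mapsto p_\alpha$ is injective: if $\alpha<\beta<|M|^+$ and $p_\alpha=p_\beta$, then by monotonicity $p_\beta\in\,\equiv_\beta\,\subseteq\,\equiv_{\alpha+1}$, contradicting $p_\alpha\notin\,\equiv_{\alpha+1}$. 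But the set of pairs of equal-length finite tuples from $M$ has cardinality $\sum_{n<\omega}|M|^{2n}$, which equals $|M|$ for infinite $M$ and is finite otherwise, contradicting an injection from a set of size $|M|^+$. Hence $\alpha^\ast<|M|^+$.

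Finally, every pair has Scott rank at most $\alpha^\ast$: if $\vec{a}\equiv_{\alpha^\ast}\vec{b}$, then stabilization and monotonicity together give $\vec{a}\equiv_\beta\vec{b}$ for every ordinal $\beta$, so $\mathrm{sr}(\vec{a},\vec{b})=-1$; otherwise the first failure stage is some $\alpha\le\alpha^\ast$. Either way $\mathrm{sr}(\vec{a},\vec{b})+1\le\alpha^\ast+1<|M|^+$, so $\mathrm{sr}(M)<|M|^+$. The only step calling for any real care, though ultimately routine, is the inductive bookkeeping that establishes monotonicity together with the truncation lemma, since the recursive successor clause interleaves them; everything beyond that is a pigeonhole count.
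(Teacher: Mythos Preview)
Your argument is the standard one, and it is precisely what the paper is gesturing at when it declares the fact ``almost immediate from the definition'' and gives no proof at all. So there is nothing to compare against; your write-up simply fills in what the author left implicit.

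One small slip worth flagging: for finite $M$ with $|M|\ge 2$ the set $\bigcup_{n<\omega} M^n\times M^n$ of equal-length pairs is countably infinite, not finite, so your pigeonhole step as stated only yields $\alpha^\ast<\omega_1$ in that case. This does not affect the paper, which only ever invokes the fact for infinite (indeed Polish) $M$, where your cardinality computation $\sum_n |M|^{2n}=|M|$ is correct and the argument goes through verbatim. If you want to cover finite $M$ as well, observe directly that $\vec a\equiv_{|M|}\vec b$ forces an automorphism taking $\vec a$ to $\vec b$ (Player~I simply enumerates $M$), so the sequence stabilizes by stage $|M|$.
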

If $(X,d)$ is a Polish metric space then it follows that we have that $\mathrm{sr}(X)<\mathfrak{c}^+$.

Let us present another characterization of the $\equiv_\alpha$ relation that will be used in the proof of the main theorem. We start with recalling the definition of Ehrenfeucht-Fra\" iss\' e game (of length $\alpha$).\\

\noindent{\bf Ehrenfeucht-Fra\" iss\' e game}\\
Let again $M$ be some structure of a countable language $L$, $\alpha$ some ordinal and let $(\vec{a},\vec{b})$ be a pair of tuples of the same length from $M$ for which there exists a partial isomorphism of $M$ mapping one to the other. At the first step Player I chooses an ordinal $\alpha_1<\alpha$, the left or right side and an element from $M$ denoted either by $x_1^L$ (if Player I chose the left side) or $x_1^R$ (if he chose the right side). Player II responds by playing an element of $M$ denoted by $x_1^R$ (if Player I played $x_1^L$) or $x_1^L$ (if Player I played $x_1^R$). In the next round, Player I chooses an ordinal $\alpha_2<\alpha_1$, again the left or right side and an element from $M$ denoted either by $x_2^L$ (if Player I chose the left side) or $x_2^R$ (if he chose the right side). Player II responds by playing an element of $M$ denoted by $x_2^R$ (if Player I played $x_2^L$) or $x_2^L$ (if Player I played $x_2^R$). They continue similarly until Player I plays $0$ as an ordinal. Thus the game ends after finitely many (let us say $n$) rounds. Player II wins if there is a partial isomorphism of $M$ mapping $\vec{a}x_1^L\ldots x_n^L$ onto $\vec{b}x_1^R\ldots x_n^R$. Otherwise, Player I wins.

Let us denote such a game by $\mathrm{EF}(\vec{a},\vec{b},\alpha)$.\\

By $\mathrm{EF}(\vec{a},\vec{b},\infty)$ we denote the game which is played analogously with the exception that Player I does not choose an ordinal but only plays a side and an element. The game ends after countably many steps in which Player I and II produce elements $(x_n^L)_{n\in \Nat}$ and $(x_n^R)_{n\in \Nat}$, and Player II wins if there is a partial isomorphism of $M$ mapping $\vec{a}$ onto $\vec{b}$, and for every $n\in \Nat$ mapping $x_n^L$ to $x_n^R$. Otherwise, Player I wins.\\

The following fact will be used in the proof of the main theorem.
\begin{fact}\label{EFcharac}
Let $M$ be as above and let $(\vec{a},\vec{b})$ be a pair of tuples of the same length from $M$. Let $\alpha$ be some ordinal. Then $\vec{a}\equiv_\alpha \vec{b}$ iff Player II has a winning strategy in $\mathrm{EF}(\vec{a},\vec{b},\alpha)$.

Moreover, if $M$ is either a countable structure or a Polish metric space, then there is an automorphism of $M$ mapping $\vec{a}$ onto $\vec{b}$ iff Player II has a winning strategy in $\mathrm{EF}(\vec{a},\vec{b},\infty)$.
\end{fact}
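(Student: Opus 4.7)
The plan is to prove the first equivalence by transfinite induction on $\alpha$, carried out simultaneously for all pairs $(\vec a,\vec b)$ of tuples of the same length. Before starting I record a monotonicity property that will be used repeatedly: if $\vec a\equiv_\alpha\vec b$ and $\beta\leq\alpha$, then $\vec a\equiv_\beta\vec b$ (a quick induction via the limit clause applied at $\beta+1$), and dually, any winning strategy for Player II in $\mathrm{EF}(\vec a,\vec b,\alpha)$ restricts to one in $\mathrm{EF}(\vec a,\vec b,\beta)$, since an opening move legal in the $\beta$-game is also legal in the $\alpha$-game.

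The base case $\alpha=0$ is immediate: $\mathrm{EF}(\vec a,\vec b,0)$ permits no Player I move, so Player II wins it precisely when a partial isomorphism carries $\vec a$ to $\vec b$, i.e.\ when $\vec a\equiv_0\vec b$. In the limit case, the inductive hypothesis turns ``$\vec a\equiv_\beta\vec b$ for every $\beta<\alpha$'' into ``Player II wins $\mathrm{EF}(\vec a,\vec b,\beta)$ for every $\beta<\alpha$''; to match this with the $\alpha$-game, note that any opening move of Player I specifies some $\alpha_1<\alpha$, and since $\alpha$ is limit we have $\alpha_1+1<\alpha$, so Player II reacts using the winning strategy in $\mathrm{EF}(\vec a,\vec b,\alpha_1+1)$. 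The converse direction is the monotonicity remark. In the successor case $\alpha=\beta+1$, suppose $\vec a\equiv_{\beta+1}\vec b$ and Player I opens with some $\alpha_1\leq\beta$ and, say, a left element $x_a$; the defining clause yields $y_b$ with $\vec ax_a\equiv_\beta\vec by_b$, and by the inductive hypothesis plus monotonicity Player II proceeds to win $\mathrm{EF}(\vec ax_a,\vec by_b,\alpha_1)$. Right-side moves are symmetric. Conversely, if Player II wins the $(\beta+1)$-game, probing each $x_a$ and $x_b$ as a first move with ordinal $\beta$ extracts the witnesses $y_b,y_a$ required by the definition of $\vec a\equiv_{\beta+1}\vec b$, again via the inductive hypothesis.

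For the ``moreover'' part, an automorphism $f$ with $f(\vec a)=\vec b$ yields the obvious strategy: answer any left move $x^L$ by $f(x^L)$ and any right move $x^R$ by $f^{-1}(x^R)$; the whole play lies in the graph of $f$, so the required partial isomorphism is just the restriction of $f$. Conversely, assume $M=\{m_n:n\in\mathbb N\}$ is countable and Player II has a winning strategy in $\mathrm{EF}(\vec a,\vec b,\infty)$. Let Player I play $m_n$ on the left at round $2n$ and $m_n$ on the right at round $2n+1$, so that the sequences $(x^L_n)$ and $(x^R_n)$ each exhaust $M$. The winning condition then delivers a single partial isomorphism of $M$ sending $\vec a$ to $\vec b$ whose domain and range both equal $M$, which is the desired automorphism.

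I do not anticipate a genuine obstacle here; the only subtlety is the bookkeeping in the limit case, where one must pass from $\alpha_1$ to $\alpha_1+1$ to align the game length with the inductive hypothesis, and this is exactly what the monotonicity remark accommodates.
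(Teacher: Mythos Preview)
Your argument is correct and is the standard one; the paper itself does not give a proof of this fact, stating only that it ``is folklore and can be easily checked.'' One small remark: your parenthetical justification for monotonicity of $\equiv_\alpha$ (``a quick induction via the limit clause applied at $\beta+1$'') is a bit cryptic, since the case $\alpha=\gamma+1\Rightarrow\gamma$ is where the actual work lies; but the claim is true and routine, and in any event you could sidestep $\equiv$-monotonicity entirely by invoking the trivial game-monotonicity instead (apply the inductive hypothesis at level $\beta$ and then restrict the resulting strategy to the shorter game of length $\alpha_1\leq\beta$).
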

\begin{proof}
We proceed by induction on $\alpha$. The case $\alpha=0$ is clear.

Assume that $\alpha$ is limit and we have proved the statement for all ordinals less than $\alpha$. Suppose that $\vec{a}\equiv_\alpha \vec{b}$. We must produce a winning strategy $\phi_\alpha$ for Player II in $\mathrm{EF}(\vec{a},\vec{b},\alpha)$. By inductive assumption, there is a winning strategy $\phi_\beta$, for every $\beta<\alpha$, for Player II in $\mathrm{EF}(\vec{a},\vec{b},\beta)$. Player I chooses an ordinal $\alpha_1<\alpha$ in his first move in $\mathrm{EF}(\vec{a},\vec{b},\alpha)$. We can let Player II play following the strategy $\phi_{\alpha_1+1}$. This describes the strategy $\phi_\alpha$.

To prove the other direction, suppose that Player II has a winning strategy $\phi_\alpha$ in $\mathrm{EF}(\vec{a},\vec{b},\alpha)$. We want to prove that $\vec{a}\equiv_\alpha \vec{b}$. Clearly, $\phi_\alpha$ is a winning strategy also in $\mathrm{EF}(\vec{a},\vec{b},\beta)$, for every $\beta<\alpha$. By inductive assumption, we have that $\vec{a}\equiv_\beta \vec{b}$ for every $\beta<\alpha$. Thus $\vec{a}\equiv_\alpha \vec{b}$.

Assume now that $\alpha=\beta+1$ and we have proved the statement for $\beta$. Suppose that $\vec{a}\equiv_\alpha \vec{b}$. We must again produce a winning strategy $\phi_\alpha$ for Player II in $\mathrm{EF}(\vec{a},\vec{b},\alpha)$. By inductive assumption, we already have a winning strategy $\phi_\beta$ for Player II in $\mathrm{EF}(\vec{a},\vec{b},\beta)$. Player I plays an ordinal $\alpha_1<\alpha$ in his first move in $\mathrm{EF}(\vec{a},\vec{b},\alpha)$. Suppose that $\alpha_1<\beta$. Then we may let Player II continue playing by following the strategy $\phi_\beta$. So suppose that $\alpha_1=\beta$ and that he plays $x_1^L\in M$ (i.e. chooses the left side). By definition, there exists $x_1^R$ such that $\vec{a}x_1^L\equiv_\beta \vec{b}x_1^R$. We let Player II respond by playing this $x_1^R$ in his first move and then following the strategy $\phi'_\beta$, where $\phi'_\beta$ is a winning strategy for Player II in $\mathrm{EF}(\vec{a}x_1^L,\vec{b}x_1^R,\beta)$, which exists by inductive assumption. The case when Player I chooses the right side in his first move is analogous.

We now prove the other direction. Suppose that Player II has a winning strategy $\phi_\alpha$ in $\mathrm{EF}(\vec{a},\vec{b},\alpha)$. We want to prove that $\vec{a}\equiv_\alpha \vec{b}$. Let $x_a,x_b\in M$ be arbitrary. We have to find $y_a,y_b\in M$ such that $\vec{a}x_a\equiv_\beta \vec{b}y_b$ and $\vec{a}y_a\equiv_\beta \vec{b}x_b$. We will show how to find $y_b$. We let Player I play $\beta$ and $x_1^L=x_a$ (i.e. the left side) in his first move. Let $y_b$ be the response of the strategy $\phi_\alpha$. The strategy $\phi_\alpha$, in the next rounds, behaves like a winning strategy for Player II in $\mathrm{EF}(\vec{a}x_a,\vec{b}y_b,\beta)$. Thus by inductive assumption, we have $\vec{a}x_a\equiv_\beta \vec{b}y_b$ and we are done.\\

The latter assertion from the statement of the fact is folklore for countable structures and the proof may be found in \cite{Ho} (Theorem 3.2.3). For Polish metric spaces, one can use the same argument to produce an isometry between two countable dense subsets and then extend it to an autoisometry of the whole space.
\end{proof}

Let us conclude this section by some notions related to stationary subsets of ordinals. We refer the reader to \cite{Je} (Chapter 8) for basic information about stationary and non-stationary sets. Recall that for any non-stationary subset of $\omega_1$ there exists a regressive non-decreasing function such that the preimages of singletons are bounded (or equivalently, the set of values of this function is uncountable). We will call such functions \emph{non-stationary} as they can be defined only on non-stationary subsets (because of Fodor's lemma).
\section{Main theorem}
\begin{thm}\label{maintheorem}
Let $(X,d)$ be a Polish metric space. Then the Scott rank of $X$ is countable.
\end{thm}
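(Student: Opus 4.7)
The plan is to exploit separability to reduce the Scott analysis of $X$ to a countable setting.  Fix once and for all a countable dense subset $D\subseteq X$.

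The starting observation is that the restriction of the transfinite chain $(\equiv_\alpha)_\alpha$ to pairs of tuples from $D$ is a decreasing sequence of subsets of the countable set $D^{<\omega}\times D^{<\omega}$.  Any such chain that strictly decreases at a successor stage $\alpha+1$ removes at least one pair; collecting one such pair at each strict-decrease stage gives an injection from these stages into the countable set $D^{<\omega}\times D^{<\omega}$, so there are only countably many strict decreases.  Consequently the restricted chain stabilizes at some countable ordinal $\alpha^*<\omega_1$: the relations $\equiv_{\alpha^*}$ and $\equiv_\alpha$ agree on all pairs of $D$-tuples for every $\alpha\geq\alpha^*$.

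The central step is to \emph{lift} this stabilization from $D$ to $X$, namely to show that if $\vec a,\vec b\in X^{<\omega}$ satisfy $\vec a\equiv_{\alpha^*}\vec b$, then $\vec a\equiv_\alpha\vec b$ for every ordinal $\alpha$ (possibly after incrementing $\alpha^*$ by a countable amount).  Via the EF-game characterization, this amounts to converting winning strategies for Player II in games on $X$ of length $\alpha^*$ into a winning strategy in the length-$\infty$ game.  My approach would be an approximation argument: when Player I plays $x\in X$, Player II approximates $x$ by a Cauchy sequence $d_n\in D$, applies her $D$-level strategies to produce responses $e_n\in D$, and plays as her actual move in $X$ the limit of a suitable Cauchy subsequence of the $e_n$.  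The key tools are density of $D$, uniform continuity of the relations $d_q$ and $d^q$ in the arguments, and completeness of $X$; together they should allow controlling the error that accumulates across the countably many rounds of the game.

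The main obstacle is precisely this lifting step: $\equiv_0$ demands \emph{exact} distance preservation between tuples, whereas any approximation via $D$-elements is inherently inexact.  The delicate technical task is to arrange the approximations so that the limits realize exact distances—this is where completeness of $X$ and uniform continuity do the essential work, together with the observation that the language consists only of strict inequalities $d_q,d^q$ that are stable under small perturbations.  Once the lifting is achieved, the theorem follows at once: every pair $(\vec a,\vec b)$ with $\mathrm{sr}(\vec a,\vec b)\neq -1$ satisfies $\mathrm{sr}(\vec a,\vec b)<\alpha^*$ (plus a countable increment if needed), and hence $\mathrm{sr}(X)<\omega_1$.
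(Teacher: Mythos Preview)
Your first observation---that the relations $\equiv_\alpha$ restricted to $D$-tuples form a decreasing chain in a countable set and hence stabilize below $\omega_1$---is correct and costless.  But it is also essentially content-free: the relations $\equiv_\alpha$ are defined via quantifiers over \emph{all} of $X$, so knowing that $\equiv_{\alpha^*}|_{D}=\equiv_\alpha|_{D}$ for $\alpha^*\le\alpha<\omega_1$ gives you no strategies, no automorphisms, and no continuity in the tuple variables.  In particular there is nothing one could call a ``$D$-level strategy'': Player~II's winning strategy in $\mathrm{EF}(\vec c,\vec d,\alpha^*)$ for $\vec c,\vec d\in D^n$ already plays in $X$ and responds to moves from $X$, so passing to $D$ has not reduced anything.

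The lifting step is therefore the entire theorem, and your sketch does not supply a mechanism.  The concrete obstruction is this: given $\vec a\equiv_{\alpha^*}\vec b$ in $X$, you have \emph{one} winning strategy in a game of bounded countable length $\alpha^*$; to win $\mathrm{EF}(\vec a,\vec b,\infty)$ you must survive $\omega$ rounds with no ordinal clock, and a single bounded-length strategy cannot be replayed indefinitely.  Approximating Player~I's move $x$ by $d_n\in D$ and feeding the $d_n$ into some strategy produces responses $e_n\in X$, but nothing forces $(e_n)$ to be Cauchy, and you have no pigeonhole leverage because you are working with a single strategy (or at best countably many).  Completeness and the open nature of $d_q,d^q$ let you pass approximate isometries to limits, but they do not manufacture the convergence you need in the first place.

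The paper's proof supplies exactly the missing leverage: it assumes $\vec a\equiv_\alpha\vec b$ for \emph{every} $\alpha<\omega_1$, takes the entire uncountable family of strategies $(\phi_\alpha)_{\alpha<\omega_1}$, and uses separability as a pigeonhole to find uncountable subsets of $\omega_1$ on which the responses cluster in balls of shrinking radius.  The regressive non-decreasing functions with bounded fibers on non-stationary sets are what let Player~I step down through the ordinal clocks of uncountably many games simultaneously while keeping the index set uncountable at every finite stage.  This uncountable-to-separable pigeonhole is the engine of the argument, and it has no counterpart in your proposal.  A further point: even once each pair has countable rank, ruling out $\mathrm{sr}(X)=\omega_1$ requires a second, more elaborate argument (Proposition~\ref{secondstep}); your uniform-$\alpha^*$ conclusion would indeed avoid this, but only if the lifting worked, which it does not as stated.
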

The proof of Theorem \ref{maintheorem} is divided into two steps: Proposition \ref{firststep} and Proposition \ref{secondstep}. For the rest of this section, let $(t_n)_{n\in \Nat}\subseteq X$ be a sequence of points of $X$ with the following property: $$\forall x\in X\forall \varepsilon>0\forall n_0\in \Nat \exists n\geq n_0 (d(x,t_n)<\varepsilon)$$
For instance, the enumeration of some countable dense subset of $X$ where each isolated element of this dense set appears infinitely often is an example.

\begin{prop}\label{firststep}
Let $(\vec{a},\vec{b})$ be a pair of finite tuples of the same length from $X$. Then the Scott rank of $(\vec{a},\vec{b})$ is countable.
\end{prop}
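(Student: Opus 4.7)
The plan is to argue the contrapositive: assume $\vec{a}\equiv_\alpha\vec{b}$ for every $\alpha<\omega_1$, and then conclude $\vec{a}\equiv_\alpha\vec{b}$ for every ordinal $\alpha$, so $\mathrm{sr}(\vec{a},\vec{b})=-1$ (hence countable). The remaining case $\mathrm{sr}(\vec{a},\vec{b})<\omega_1$ is trivial. Throughout fix a countable dense set $D\subseteq X$ containing all entries of $\vec{a}$ and $\vec{b}$.

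Following Scott's classical template for countable structures, I would introduce the candidate back-and-forth system
$$J=\{(\vec{c},\vec{e})\in X^{<\omega}\times X^{<\omega}:\vec{c}\equiv_\alpha\vec{e}\text{ for every }\alpha<\omega_1\},$$
which contains $(\vec{a},\vec{b})$ by assumption. The aim is to show $J$ has the back-and-forth extension property: given $(\vec{c},\vec{e})\in J$ and $x\in X$, there is $y\in X$ with $(\vec{c}x,\vec{e}y)\in J$ (and symmetrically on the right). Once this is established a standard induction shows that every pair in $J$ is $\equiv_\alpha$-equivalent for every ordinal $\alpha$, yielding $\mathrm{sr}(\vec{a},\vec{b})=-1$. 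For each $\alpha<\omega_1$ the set $Y_\alpha=\{y:\vec{c}x\equiv_\alpha\vec{e}y\}$ is nonempty (since $\vec{c}\equiv_{\alpha+1}\vec{e}$) and the family $(Y_\alpha)$ is decreasing; the task is to find a point in $\bigcap_{\alpha<\omega_1}Y_\alpha$. Scott's original argument gets away with a pigeonhole on candidate $y$'s drawn from a countable domain; here this is unavailable because $X$ is uncountable, and this is where separability and completeness must replace pigeonhole.

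The plan for producing $y$ is as follows. Approximate $x$ by a sequence $x_n\in D$ with $x_n\to x$, and construct responses $y_n\in X$ \emph{coherently} rather than independently, so that for every $n$ the extended tuples satisfy $\vec{c}x_1\dots x_n\equiv_0\vec{e}y_1\dots y_n$; this forces $d(y_i,y_j)=d(x_i,x_j)$, so $(y_n)$ inherits the Cauchy property from $(x_n)$ and completeness of $X$ delivers $y=\lim_n y_n$. To arrange coherence at a given target level $\alpha<\omega_1$, one exploits that $(\vec{c},\vec{e})\in J$ gives equivalence $\vec{c}\equiv_{\alpha+n}\vec{e}$ for every $n$, providing enough slack to carry out $n$ back-and-forth rounds while keeping the full extended tuple $\alpha$-equivalent. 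A diagonal construction over $n$ and over $\alpha$, together with a limit argument using continuity of the distance functions, then promotes the equivalences to the limit $y$, giving $\vec{c}x\equiv_\alpha\vec{e}y$ for every countable $\alpha$, i.e.\ $(\vec{c}x,\vec{e}y)\in J$. The symmetric case is identical.

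The main obstacle is this coherent diagonal selection of $(y_n)$: one must choose the responses not one at a time but as a compatible sequence of finite-tuple extensions, and verify that the equivalences at each fixed countable level survive the passage to the limit. It is precisely here that the Polish hypothesis is used in an essential way — separability of $X$ supplies the countable approximations $x_n\in D$ of arbitrary $x\in X$, and completeness converts the Cauchy sequence $(y_n)$ into a bona fide element $y$ of $X$, thereby emulating, in the Polish setting, the role played by the countability of the universe in Scott's original argument.
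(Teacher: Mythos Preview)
Your outline has a genuine gap at exactly the point you flag as ``the main obstacle.'' For a \emph{fixed} $\alpha<\omega_1$ it is indeed possible to choose $y_1,\dots,y_n$ with $\vec{c}x_1\dots x_n\equiv_\alpha \vec{e}y_1\dots y_n$, using that $\vec{c}\equiv_{\alpha+n}\vec{e}$. But for different $\alpha$ these sequences $(y_n^\alpha)$ need not agree, and the phrase ``diagonal construction over $n$ and over $\alpha$'' does not pick out a working argument: there are $\omega_1$ levels $\alpha$ to hit and only $\omega$ stages $n$, so no countable diagonal can cover them all (a cofinal $\omega$-sequence in $\omega_1$ does not exist). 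Likewise the appeal to ``continuity of the distance functions'' only controls $\equiv_0$; for $\alpha>0$ the relation $\equiv_\alpha$ involves nested quantifiers over $X$ and there is no reason to expect it to be closed, so from $\vec{c}x_n\equiv_\alpha\vec{e}y_n$ and $x_n\to x$, $y_n\to y$ one cannot conclude $\vec{c}x\equiv_\alpha\vec{e}y$. In short, showing $\bigcap_{\alpha<\omega_1}Y_\alpha\neq\emptyset$ is the whole difficulty, and neither separability nor completeness by themselves supply the missing compactness.

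The paper bypasses the back-and-forth system $J$ entirely and instead constructs an autoisometry directly. The mechanism that replaces your diagonal is a transfinite pigeonhole: one plays the \emph{same} sequence of challenges $d_1,d_1,d_2,d_2,\dots$ against the winning strategies $\phi_\alpha$ for \emph{all} $\alpha<\omega_1$ simultaneously. At each stage, because uncountably many strategies are responding into a separable space, some uncountable subfamily of responses clusters within $1/2^n$; one records a representative response and passes to that subfamily. The ordinal bookkeeping (each $\phi_\alpha$ must see a strictly decreasing sequence of ordinals below $\alpha$) is handled by regressive non-decreasing functions on non-stationary subsets of $\omega_1$ with bounded fibres, which keep the image uncountable after each round. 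The output is not a single extension $y$ but an almost-isometry $\psi$ on a dense set, with explicit error $1/2^i+1/2^j$, which then extends by completeness to a genuine autoisometry sending $\vec{a}$ to $\vec{b}$. This uncountable-pigeonhole step is the idea your proposal is missing.
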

It follows from Proposition \ref{firststep} that $\mathrm{sr}(X)\leq \omega_1$. If it were greater than $\omega_1$ there would have to be a pair of tuples $(\vec{a},\vec{b})$ of the same length from $X$ such that $\mathrm{sr}(\vec{a},\vec{b})\geq \omega_1$ which would contradict the proposition.\\
\begin{proof}
To simplify the notation we shall assume that the length of the tuples is $1$, i.e. we have a pair of points $(a,b)$. Suppose that for every $\alpha<\omega_1$ we have $a\equiv_\alpha b$. We shall prove that then there is an isometry of $X$ which maps $a$ to $b$. This will imply that $a\equiv_\alpha b$ for every $\alpha$.

Using Fact \ref{EFcharac}, for every $\alpha<\omega_1$ we have some  winning strategy $\phi_\alpha$ for Player II in the game $\mathrm{EF}(a,b,\alpha)$. In what follows, we will play the Ehrenfeucht-Fra\" iss\' e games $\mathrm{EF}(a,b,\alpha)$, for each $\alpha<\omega_1$, simultaneously. We shall inductively construct functions $\psi,\varphi:\Nat\rightarrow \Nat$, uncountable subsets $N_1\supseteq M_1\supseteq N_2\supseteq M_2\supseteq\ldots$ of $\omega_1$ and non-stationary functions $F_1:N_1\rightarrow \omega_1$, $G_1:F_1[M_1]\rightarrow \omega_1$, $F_2:G_1\circ F_1[N_2]\rightarrow \omega_1$, etc. The functions $\psi,\varphi$ will determine functions $t_n\to t_{\psi(n)}$ and $t_n\to t_{\varphi(n)}$ that will be `almost-isometries' and will help us define an autoisometry of $X$ mapping $a$ to $b$. The role of the non-stationary functions will be to prescribe which ordinals Player I should play in his rounds: Each game $\mathrm{EF}(a,b,\alpha)$ ends after finitely many rounds. Playing carefully using the non-stationary functions we can however guarantee that after finitely many rounds we still have uncountably many ordinals $\alpha$ such that we can continue playing in the game $\mathrm{EF}(a,b,\alpha)$.

Since the first and the general step of the induction is basically the same, we will describe it at once.\\

\noindent {\bf The $n$-th step of the induction.} Suppose that we have already found $y^1,x^1,\ldots,y^{n-1},x^{n-1}$, uncountable sets $N_1\supseteq M_1\supseteq \ldots M_{n-1}\supseteq N_n$ and non-stationary functions $F_1,G_1,\ldots, G_{n-1},F_n$ such that for every $i\leq n-1$ both $F_i\circ G_{i-1}\circ \ldots\circ F_1[M_i]$ and $G_i\circ F_i\circ\ldots\circ F_1[N_{i+1}]$ are uncountable non-stationary subsets. In case we are at the first step of the induction, we just consider some arbitrary uncountable non-stationary subset $N_1\subseteq \omega_1$ and some non-stationary function $F_1:N_1\rightarrow \omega_1$.

For each $\alpha\in N_n$ consider the response of the strategy $\phi_\alpha$ when Player I plays $(F_1(\alpha),L,t_1)$ in the first round, $(G_1\circ F_1(\alpha),R,t_1)$ in the second round, so on, and finally plays $(F_n\circ G_{n-1}\circ\ldots\circ F_1(\alpha),L,t_n)$. For the case $n=1$, it means we just consider the response of the strategy $\phi_\alpha$ when Player I plays $(F_1(\alpha),L,t_1)$.

Denote such a response by $y_\alpha^n$. Observe that since $\phi_\alpha$ is a winning strategy for Player II we have
\begin{equation}\label{addedeq1}
d(a,t_n)=d(b,y_\alpha^n)
\end{equation}
and, if $n>1$, for any $m\leq n-1$
\begin{equation}\label{addedeq2}
d(t_m,t_n)=d(y_\alpha^m,y_\alpha^n)
\end{equation}
Since $F_n\circ G_{n-1}\circ\ldots\circ F_1[N_n]$ is uncountable and $X$ is separable there exists an uncountable subset $M'_n\subseteq N_n$ such that
\begin{equation}\label{firsteq} 
\forall \alpha,\beta\in M'_n (d(y_\alpha^n,y_\beta^n)<1/2^n)
\end{equation}
Let $\psi(n)\in \Nat$ be an arbitrary natural number such that for some $\alpha\in M'_n$ we have $d(t_{\psi(n)},y_\alpha^n)<1/2^n$, e.g. $\min\{m\in \Nat: \exists \alpha\in M'_n (d(t_m,y_\alpha^n)<1/2^n\}$. Shrink $M'_n$, if necessary, to an uncountable subset $M_n\subseteq M'_n$ so that $F_n\circ\ldots\circ F_1[M_n]$ is non-stationary. Let $G_n:F_n\circ\ldots\circ F_1[M_n]\rightarrow \omega_1$ be some non-stationary function.

Again, for each $\alpha\in M_n$ consider the response of the strategy $\phi_\alpha$ when Player I plays $(F_1(\alpha),L,t_1)$ in the first round, $(G_1\circ F_1(\alpha),R,t_1)$ in the second round, so on, and finally plays $(F_n\circ G_{n-1}\circ\ldots\circ F_1(\alpha),L,t_n)$ in the $2n-1$-st round and then $(G_n\circ F_n\circ\ldots\circ F_1(\alpha),R,t_n)$ in the $2n$-th round. Denote such a response by $x_\alpha^n$. Again observe that since $\phi_\alpha$ is a winning strategy for Player II we have
\begin{equation}\label{addedeq3}
d(b,t_n)=d(b,x_\alpha^n)
\end{equation}
and, if $n>1$, for any $m\leq n-1$
\begin{equation}\label{addedeq4}
d(t_m,t_n)=d(x_\alpha^m,x_\alpha^n)
\end{equation}
Since $G_n\circ\ldots\circ F_1[M_n]$ is uncountable and $X$ is separable there exists an uncountable subset $N'_{n+1}\subseteq M_n$ such that
\begin{equation}\label{secondeq}
\forall \alpha,\beta\in N'_{n+1} (d(x_\alpha^n,x_\beta^n)<1/2^n)
\end{equation}
Let $\varphi(n)\in \Nat$ be an arbitrary, e.g. minimal, element of the set $\{m\in \Nat:\exists \alpha\in N'_{n+1} (d(t_m,x_\alpha^n)<1/2^n)\}$. Shrink $N'_{n+1}$, if necessary, to an uncountable subset $N_{n+1}\subseteq N'_{n+1}$ so that $G_n\circ\ldots\circ F_1[N_{n+1}]$ is non-stationary. Finally, let $F_{n+1}:G_n\circ\ldots\circ F_1[N_{n+1}]\rightarrow \omega_1$ be some non-stationary function.

This finishes the inductive construction.\\

After the construction is done, we have the functions $\psi,\varphi:\Nat\rightarrow \Nat$.
\begin{claim}\label{goodpsi}
Let $i,j\in \Nat$. Let $\rho(i)$ denote either $\psi(i)$ or any element from $\varphi^{-1}(i)$ (provided $\varphi^{-1}(i)$ is non-empty), $\rho(j)$ is defined analogously.   Then we have $$|d(t_i,t_j)-d(t_{\rho(i)},t_{\rho(j)})|<1/2^{i-1}+1/2^{j-1}$$ and $$|d(a,t_j)-d(b,t_{\rho(j)})|<1/2^{j-1}$$
\end{claim}
\noindent\emph{Proof of the claim.} Fix some $i,j$ and suppose that $i<j$ and also that $\rho(i)=\psi(i)$ and $\rho(j)=\psi(j)$; the other cases are analogous and omitted. Let $\alpha\in M'_j$, by (\ref{firsteq}) (and definition of $\psi(j)$ below (\ref{firsteq})) we have $$d(t_{\rho(j)},y_\alpha^j)<1/2^{j-1}$$ and $$d(t_{\rho(i)},y_\alpha^i)<1/2^{i-1}$$ Since $\phi_\alpha$ is a winning strategy, by (\ref{addedeq2}) we must have $$d(t_i,t_j)=d(y_\alpha^i,y_\alpha^j)$$ Thus putting the (in)equalities above together we get $$|d(t_i,t_j)-d(t_{\rho(i)},t_{\rho(j)})|<1/2^{i-1}+1/2^{j-1}$$

Similarly, since $\phi_\alpha$ is a winning strategy, by (\ref{addedeq1}) we must have $$d(a,t_j)=d(b,y_\alpha^j)$$ and the inequality $$|d(a,t_j)-d(b,t_{\rho(j)})|<1/2^{j-1}$$ again follows.

\hfill $\qed$ (of the claim)\\

We now define the autoisometry $\chi:X\rightarrow X$ taking $a$ to $b$. For any $x\in X$ choose arbitrarily some strictly increasing $\iota:\Nat\rightarrow \Nat$ so that the sequence $(t_{\iota(n)})_n$ is a Cauchy sequence converging to $x$. It follows from Claim \ref{goodpsi} that $(t_{\psi(\iota(n))})_n$ is Cauchy as well and we set $\chi(x)$ as the limit of this sequence. It also follows from Claim \ref{goodpsi} that $\chi$ is correctly defined, i.e. it does not matter which strictly increasing $x:\Nat\rightarrow \Nat$ with the desired properties we choose, that $\chi(a)=b$, and finally that $\chi$ is an autoisometry of $X$. This finishes the proof.
\end{proof}

Thus we must rule out the possibility that $\mathrm{sr}(X)=\omega_1$. Assume that it is the case. Then there exists a cofinal subset $A\subseteq \omega_1$ of countable ordinals such that for every $\alpha\in A$ there is a pair of tuples of the same length $(\vec{a}_\alpha,\vec{b}_\alpha)$ from $M$ such that $\vec{a}_\alpha\equiv_\alpha \vec{b}_\alpha$, however $\vec{a}_\alpha \cancel{\equiv}_{\alpha+1} \vec{b}_\alpha$. Without loss of generality, we may suppose that the length of $\vec{a}_\alpha$ and $\vec{b}_\alpha$ is the same, say $n$, for all $\alpha\in A$.

For a set of (countable) ordinals $A$ and some $n\in \Nat$, let us call the $A$-indexed set of pairs of tuples $((\vec{a}_\alpha,\vec{b}_\alpha))_{\alpha\in A}$ an $(A,n)$-family if for every $\alpha\in A$ we have $\vec{a}_\alpha\equiv_\alpha \vec{b}_\alpha$ and the length of both $\vec{a}_\alpha$ and $\vec{b}_\alpha$ is $n$.

We reach the contradiction by applying the following proposition.
\begin{prop}\label{secondstep}
Let $A\subseteq \omega_1$ be a cofinal subset of countable ordinals and $n\in\Nat$. Let $((\vec{a}_\alpha,\vec{b}_\alpha))_{\alpha\in A}$ be an $(A,n)$-family. Then there exists an uncountable subset $B\subseteq A$ such that for every pair $(\vec{a},\vec{b})$ lying in the closure of $\{(\vec{a}_\alpha,\vec{b}_\alpha):\alpha\in B\}\subseteq X^{2n}$ there exists an autoisometry of $X$ mapping $\vec{a}$ onto $\vec{b}$.
\end{prop}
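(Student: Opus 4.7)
The plan is to adapt the proof of Proposition \ref{firststep} to a varying starting pair. First, I would extract an uncountable $B\subseteq A$ enjoying a \emph{condensation property}: every $(\vec{a},\vec{b})$ in the closure of $\{(\vec{a}_\alpha,\vec{b}_\alpha):\alpha\in B\}\subseteq X^{2n}$ has the property that each of its neighbourhoods in $X^{2n}$ contains $(\vec{a}_\alpha,\vec{b}_\alpha)$ for uncountably many $\alpha\in B$. This is routine from separability of $X^{2n}$: either some single pair $(\vec{a}_*,\vec{b}_*)$ is attained for uncountably many $\alpha\in A$, in which case one takes $B$ to be this uncountable set, so the closure is the singleton $\{(\vec{a}_*,\vec{b}_*)\}$, and invokes Proposition \ref{firststep} on $(\vec{a}_*,\vec{b}_*)$ directly (using that $\vec{a}_*\equiv_\alpha\vec{b}_*$ for cofinally many $\alpha<\omega_1$ together with the monotonicity of $\equiv_\bullet$); or else the map $\alpha\mapsto(\vec{a}_\alpha,\vec{b}_\alpha)$ has countable fibres, and the standard fact that the set of non-condensation points of an uncountable set in a separable metric space is countable yields the desired $B$ after discarding countably many exceptional $\alpha$.

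Now fix $(\vec{a},\vec{b})$ in the closure and, for $k\in\mathbb{N}$, let $B_k\subseteq B$ be the set of $\alpha$ for which $(\vec{a}_\alpha,\vec{b}_\alpha)$ is within $1/2^k$ of $(\vec{a},\vec{b})$ coordinatewise; each $B_k$ is uncountable by condensation. I would then run the Fodor-cum-separability construction of Proposition \ref{firststep} verbatim, with the sole twist that at the $k$-th step the separability clustering is performed in the product $X\times X^{2n}$, pairing the new response $y_\alpha^k$ (resp.\ $x_\alpha^k$) with the starting pair $(\vec{a}_\alpha,\vec{b}_\alpha)$, and one picks a cluster whose $X^{2n}$-projection is within $1/2^{k+1}$ of $(\vec{a},\vec{b})$. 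The analogue of Claim \ref{goodpsi} then reads $|d(z^i,z^j)-d(\psi(z^i),\psi(z^j))|<1/2^i+1/2^j$ exactly as in the original (the winning property of $\phi_\alpha$ equates distances between response-elements irrespective of the starting pair), together with $|d(\vec{a}[\ell],z^j)-d(\vec{b}[\ell],\psi(z^j))|<3/2^j$, the extra $2/2^j$ being contributed by $(\vec{a}_\alpha,\vec{b}_\alpha)\approx(\vec{a},\vec{b})$ within $1/2^j$ at step $j$. Both bounds still vanish as $i,j\to\infty$, so extending $\psi$ to an autoisometry $\chi$ of $X$ by Cauchy sequences from $\{d_1,d_2,\ldots\}$ exactly as in Proposition \ref{firststep} produces $\chi(\vec{a})=\vec{b}$.

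The main obstacle is the bookkeeping underlying the joint clustering: after a naive Fodor-shrinkage of the working set, its intersection with $B_{k+1}$ need not remain uncountable, which would derail the next step. I would handle this by maintaining the inductive invariant that $(\vec{a},\vec{b})$ remains a condensation point of the $X^{2n}$-projection of the working set throughout the construction. At step $k$, one first restricts to the uncountable set of $\alpha$'s whose pair is within $1/2^{k+1}$ of $(\vec{a},\vec{b})$ (available by the previous invariant), then performs the response-clustering there, and finally discards the countably many $\alpha$'s whose image under $\alpha\mapsto(\vec{a}_\alpha,\vec{b}_\alpha)$ is not a condensation point of the projection of the new working set. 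A short verification then shows $(\vec{a},\vec{b})$ remains a condensation point of the surviving projection, so the invariant is preserved, and the construction can proceed through all $k\in\mathbb{N}$.
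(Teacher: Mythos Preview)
Your extraction of $B$ via condensation is fine, and the idea of running the Proposition~\ref{firststep} machine while simultaneously steering the moving starting pair toward a fixed target $(\vec{a},\vec{b})$ is natural. The gap is in your final paragraph: the ``short verification'' that $(\vec{a},\vec{b})$ remains a condensation point after response-clustering does not go through, and this is a genuine obstruction rather than a bookkeeping nuisance. After restricting to $S'=\{\alpha:\text{pair within }1/2^{k+1}\}$ you still have condensation at $(\vec{a},\vec{b})$; but the response-clustering selects \emph{one} uncountable $S''\subseteq S'$ on which the $y_\alpha^k$ are close, and condensation at a prescribed point is \emph{not} preserved under passage to one piece of a countable partition. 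Schematically: if the projection of $S'$ is $\bigcup_m P_m$ with each $P_m$ uncountable and sitting at distance $\approx 1/m$ from $(\vec{a},\vec{b})$, and the (arbitrarily fixed) strategies happen to return $y_\alpha^k=z_m$ for $\alpha$ with pair in $P_m$, the $z_m$ pairwise far apart, then every response-cluster has projection bounded away from $(\vec{a},\vec{b})$. Your step of discarding the countably many $\alpha$ whose pair is not a condensation point of the new projection is a red herring: it forces every \emph{surviving} pair to be a condensation point but says nothing about the external target $(\vec{a},\vec{b})$.

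The paper's proof sidesteps exactly this by not targeting a specific $(\vec{a},\vec{b})$ with the ordinal games. It proceeds in two stages. First (Lemma~\ref{firstlem}) it runs a Cantor-scheme variant of the Proposition~\ref{firststep} construction, letting the limiting pair be \emph{whatever the scheme produces}; this yields that the set $I$ of pairs admitting an autoisometry is $\mathfrak{c}$-dense in the closure $F$. Second (Lemma~\ref{secondlem}) it upgrades from dense to all: for pairs already in $I$ one has a winning strategy in the \emph{infinite} game $\mathrm{EF}(\vec{a}_r,\vec{b}_r,\infty)$, so no regressive functions are needed, and one can build an $\mathbb{N}^{<\mathbb{N}}$-scheme that at every node keeps \emph{all} clusters while maintaining the ``each basic open meets $X_{R_s}$ uncountably or not at all'' property. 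That property gives $\overline{X}_{R_s}=\bigcup_n\overline{X}_{R_{sn}}$, so every point of the closure lies on some branch. Your one-step approach fails precisely because committing to a single response-cluster (needed to name $y^k$) is incompatible with keeping $(\vec{a},\vec{b})$ reachable; separating the ordinal-game step from the covering step, with the infinite game bridging them, is what makes the argument go through.
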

Observe that once the proposition is proved, we are done. Indeed, apply Proposition \ref{secondstep} to the set $A$ above to get the set $B$. Then for every $\alpha\in B$, the pair $(\vec{a}_\alpha,\vec{b}_\alpha)$ obviously lies in the closure of $\{(\vec{a}_\alpha,\vec{b}_\alpha):\alpha\in B\}\subseteq X^{2n}$, thus, according to the propositon, there exists an autoisometry of $X$ mapping $\vec{a}_\alpha$ onto $\vec{b}_\alpha$. That is, however, in contradiction with the assumption that $\vec{a}_\alpha \cancel{\equiv}_{\alpha+1}\vec{b}_\alpha$. Eventhough, it would be sufficient to reach the contradiction with a single such pair, it is not clear how to find it without actually showing that there are uncountably many such pairs.\\

Thus it remains to prove Proposition \ref{secondstep}. In what follows, when $\kappa$ is an infinite cardinal and $A\subseteq B$ are two subsets of some topological space, then we shall say that $A$ is {\it $\kappa$-dense} in $B$ if for every relatively open subset $O\subseteq B$ we have $|A\cap O|\geq \kappa$.
\begin{proof}[Proof of Proposition \ref{secondstep}]
For any subset $C\subseteq A$, by $X_C$ we shall denote the set $\{(\vec{a}_\alpha,\vec{b}_\alpha):\alpha\in C\}$. Let $B\subseteq A$ be an uncountable subset such that for every basic open set $O\subseteq X^{2n}$ the intersection $X_B\cap O$ is either uncountable or empty. Such $B$ exists since $X$ is second countable, i.e. there are only countably many basic open sets.

Let $F$ be the closed set that is a complement of the following open set: $\bigcup \{O:O\text{ is basic open and }O\cap X_B=\emptyset\}$. By assumption, $X_B$ is $\omega_1$-dense in $F$. It is sufficient to prove the following lemma.
\begin{lem}\label{firstlem}
For every $(\vec{a},\vec{b})\in F$ we have $\mathrm{sr}(\vec{a},\vec{b})=-1$, i.e. there exists an autoisometry of $X$ that maps $\vec{a}$ onto $\vec{b}$.
\end{lem}
\noindent\emph{Proof of Lemma \ref{firstlem}.} We split the proof into two parts.\\

\noindent{\bf Step 1} At first, we prove that the set $I=\{(\vec{a},\vec{b})\in F:\mathrm{sr}(\vec{a},\vec{b})=-1\}$ is $\mathfrak{c}$-dense in $F$, i.e. for every basic open set $O$ such that $O\cap F\neq \emptyset$ we have $|O\cap I|=\mathfrak{c}$.

Take an arbitrary basic open $O$ such that $O\cap F\neq \emptyset$. We need to show that $|O\cap I|=\mathfrak{c}$. Let $C\subseteq B$ be the uncountable subset of ordinals such that $X_C=X_B\cap O$. For each $\alpha\in C$, let $\phi_\alpha$ be the winning strategy for Player II in the game $\mathrm{EF}(\vec{a}_\alpha,\vec{b}_\alpha,\alpha)$.

By induction, we shall produce a Cantor scheme $(C_s)_{s\in 2^{<\Nat}}$, where $C_s\subseteq C$ for every $s\in 2^{<\Nat}$, such that for each $s\in 2^{<\Nat}$ we will have $\overline{X_{C_{s0}}}\cap \overline{X_{C_{s1}}}=\emptyset$, and for each $r\in 2^\Nat$ we will have that $\bigcap_n \overline{X}_{C_{r\upharpoonright n}}$ is a singleton. Moreover, we shall produce a function $\psi: 2^{<\Nat}\rightarrow \Nat$, which, similarly as in Proposition \ref{firststep}, will help us define the autoisometries. The argument uses the same ideas as in the proof of Proposition \ref{firststep} thus we will omit some details.

Consider two disjoint uncountable non-stationary subsets $C_0$ and $C_1$ of $C$ such that $\mathrm{diam}(X_{C_i})<1/2$, for $i\in\{0,1\}$, and $\overline{X}_{C_0}\cap \overline{X}_{C_1}=\emptyset$ (where we consider the diameter in some compatible metric, let us say the sum metric on $X^{2n}$, and $\overline{X}_{C_i}$ denotes the closure of $X_{C_i}$). Let $F_i:C_i\rightarrow \omega_1$, for $i\in\{0,1\}$, be some non-stationary function.

Now, for each $\alpha\in C_i$, $i\in\{0,1\}$, consider the response of the strategy $\phi_\alpha$ when Player I plays $(F_i(\alpha),L,t_1)$. Denote such a response by $y_\alpha^i$. Notice that, because $\phi_\alpha$ is a winning strategy for Player II, $d(\vec{a}(m),t_1)=d(\vec{b}(m),y_\alpha^i)$ for any $m<|\vec{a}|$.

Since $F_i[C_i]$ is uncountable and $X^{2n}$ is separable there exists an uncountable subset $D_i\subseteq C_i$ such that $\forall \alpha,\beta\in D_i (d(y_\alpha^i,y_\beta^i)<1/2)$. Let $\psi(i)\in \Nat$ be an arbitrary natural number such that $t_{\psi(i)}$ is within the distance $1/2$ from the set $\{y_\alpha^i:\alpha\in D_i\}$, e.g. $\min\{m\in \Nat: \exists \alpha\in D_i (d(t_m,y_\alpha^i)<1/2)\}$.

Now consider two disjoint uncountable subsets $C_{i0}$ and $C_{i1}$ of $D_i$, again $i\in\{0,1\}$, such that $F_i[C_{ij}]$ is non-stationary, $\mathrm{diam}(X_{C_{ij}})<1/4$, for $j\in\{0,1\}$, and $\overline{X}_{C_{i0}}\cap \overline{X}_{C_{i1}}=\emptyset$. Let $F_{ij}:F_i[C_{ij}]\rightarrow \omega_1$ be some non-stationary function.

For $i,j\in\{0,1\}$ and for each $\alpha\in C_{ij}$ consider the response of the strategy $\phi_\alpha$ when Player I plays $(F_i(\alpha),L,t_1)$ in the first round and then $(F_{ij}\circ F_i(\alpha),R,t_1)$ in the second round. Denote such a response by $x_\alpha^{ij}$. Again notice that, because $\phi_\alpha$, is a winning strategy for Player II, $d(\vec{b}(m),t_1)=d(\vec{a}(m),x_\alpha^{ij})$ for any $m<|\vec{a}|$.

Analogously as above, since $F_{ij}\circ F_i[C_{ij}]$ is uncountable and $X^{2n}$ is separable there exists an uncountable subset $D_{ij}\subseteq C_{ij}$ such that $\forall \alpha,\beta\in D_{ij} (d(x_\alpha^{ij},x_\beta^{ij})<1/4)$. Let $\psi(ij)\in \Nat$ be an arbitrary natural number such that $t_{\psi(ij)}$ is within the distance $1/4$ from the set  $\{x_\alpha^{ij}:\alpha\in D_{ij}\}$, e.g. $\min\{m\in \Nat: \exists \alpha\in D_{ij} (d(t_m,x_\alpha^{ij})<1/4)\}$.

We then again find two disjoint uncountable subsets $C_{ij0}$ and $C_{ij1}$ of $D_{ij}$, $i,j\in\{0,1\}$, such that $F_{ij}\circ F_i[C_{ijk}]$ is a non-stationary subset, $\mathrm{diam}(X_{C_{ijk}})<1/8$, for $k\in\{0,1\}$, and $\overline{X}_{C_{ij0}}\cap \overline{X}_{C_{ij1}}=\emptyset$. Let $F_{ijk}:F_{ij}\circ F_i[C_{ijk}]\rightarrow \omega_1$ be some non-stationary function.\\

Following this scheme and using the same ideas as in the proof of Proposition \ref{firststep} we produce the Cantor scheme $(C_s)_{s\in 2^{<\Nat}}$ and the function $\psi: 2^{<\Nat}\rightarrow \Nat$ such that for each $r\in 2^\Nat$ we have that
\begin{itemize}
\item for every even $n\in \Nat$ and for every $\alpha\in C_{r\upharpoonright n}$
\begin{equation}\label{neweq1}
d(y_\alpha^{r\upharpoonright n},t_{\psi(r\upharpoonright n)})<1/2^{n-1}
\end{equation}
and for every odd $n\in \Nat$ and for every $\alpha\in C_{r\upharpoonright n}$
\begin{equation}\label{neweq2}
d(x_\alpha^{r\upharpoonright n},t_{\psi(r\upharpoonright n)})<1/2^{n-1}
\end{equation}
\item for every $n\in \Nat$
\begin{equation}\label{neweq3}
\mathrm{diam}(X_{C_{r\upharpoonright n}})<1/2^n
\end{equation}
(in the sum metric) 
\item $\bigcap_n \overline{X}_{C_{r\upharpoonright n}}$ is a single pair of tuples $(\vec{a}_r,\vec{b}_r)\in F$

\end{itemize}

The following claim is analogous to Claim \ref{goodpsi}.
\begin{claim}\label{goodpsi2}
Let $r\in 2^\Nat$ and $i,j\in \Nat$. Let $\rho(i)$ be either $\psi(r\upharpoonright 2i)$ or any element from $\psi^{-1}(i)$ provided that $\rho(i)\subseteq r$ and $|\rho(i)|$ is odd. Then we have $$|d(t_i,t_j)-d(t_{\rho(i)},t_{\rho(j)})|<1/2^{2i-1}+1/2^{2j-1}$$ and for every $m<|\vec{a}_r|$ we have $$|d(\vec{a}_r(m),t_j)-d(\vec{b}_r(m),t_{\rho(j)})|<1/2^{2j-3}$$
\end{claim}
\noindent\emph{Proof of the claim.} The proof of the first part, i.e. $$|d(t_i,t_j)-d(t_{\rho(i)},t_{\rho(j)})|<1/2^{2i-1}+1/2^{2j-1}$$ is analogous to the proof of the corresponding part in Claim \ref{goodpsi}.

Let us prove the second part. Suppose that $\rho(j)=\psi(r\upharpoonright 2j)$, the other case is similar. Let $\alpha\in C_{r\upharpoonright 2j}$ be arbitrary. By (\ref{neweq1}) we have $$d(t_{\psi(r\upharpoonright 2j)},y_\alpha^{r\upharpoonright 2j})<1/2^{2j-1}$$ Moreover, by (\ref{neweq3}) we have $$d(\vec{a}_\alpha(m),\vec{a}_r(m))< 1/2^{2j}$$ and $$d(\vec{b}_\alpha(m),\vec{b}_r(m))< 1/2^{2j}$$ Since $\phi_\alpha$ is a winning strategy for Player II, we have $$d(y_\alpha^{r\upharpoonright 2j},\vec{b}_\alpha(m))=d(t_j,\vec{a}_\alpha(m))$$

Putting the (in)equalities above together we get the desired $$|d(\vec{a}_r(m),t_j)-d(\vec{b}_r(m),t_{\psi(r\upharpoonright 2j)})|<1/2^{2j-3}$$

\hfill $\qed$ (of the claim)\\

As in the proof of Proposition \ref{firststep}, for every $r\in 2^\Nat$, we define the autoisometry $\chi_r:X\rightarrow X$ that maps $\vec{a}_r$ onto $\vec{b}_r$ and we are done. For every $x\in X$, we choose arbitrarily some strictly increasing $\iota:\Nat\rightarrow \Nat$ so that the sequence $(t_{\iota(n)})_n$ is a Cauchy sequence converging to $x$. It follows from Claim \ref{goodpsi2} that $(t_{\psi(r\upharpoonright 2\iota(n))})_n$ is Cauchy as well and we set $\chi_r(x)$ as the limit of this sequence. The verification that $\chi_r$ is as desired uses Claim \ref{goodpsi2} in the same way as the proof of Proposition \ref{firststep} uses Claim \ref{goodpsi} that $\chi$ is correctly defined.\\

\noindent{\bf Step 2} We now prove that $I=\{(\vec{a},\vec{b})\in F:\mathrm{sr}(\vec{a},\vec{b})=-1\}$, which was proved to be $\omega_1$-dense in $F$ (even $\mathfrak{c}$-dense), is in fact equal to $F$. The proof is again a variation on the same ideas as in the proofs above so we shall omit some details.\\

Let $R$ be the index set for the set $I=\{(\vec{a},\vec{b})\in F:\mathrm{sr}(\vec{a},\vec{b})=-1\}$, i.e. $I=\{(\vec{a}_\alpha,\vec{b}_\alpha):\alpha\in R\}$. Analogously as in Step 1, for every $P\subseteq R$ by $X_P$ we shall denote the set $\{(\vec{a}_\alpha,\vec{b}_\alpha):\alpha\in P\}$. Moreover, for every $\alpha\in R$ let us denote by $\phi_\alpha$ the winning strategy for Player II in the game $\mathrm{EF}(\vec{a}_\alpha,\vec{b}_\alpha,\infty)$. By induction, we will produce an $\Nat^{<\Nat}$-indexed collection $(R_s)_{s\in \Nat^{<\Nat}}$ such that for every basic open $O$  and $s\in \Nat^{<\Nat}$ we have $X_{R_s}\cap O$ is either empty or uncountable. In addition, we shall again produce a function $\psi:\Nat^{<\Nat}\rightarrow \Nat$ that will help us define the autoisometries.

We describe the general steps of the induction. The $n$-th step of the induction depends on what is $n\; \mathrm{mod}\; 4$. Let us describe the particular cases.\\

Suppose that $n\equiv 1 \;(\mathrm{mod}\; 4)$. Then for every $s\in \Nat^{<\Nat}$ such that $|s|=n-1$ we divide $R_s$ into (not necessarily disjoint) countably many uncountable sets $R_{s1},R_{s2},\ldots$ such that for each $m\in \Nat$ we have $\mathrm{diam}(X_{R_{sm}})<1/2^{|s|/2+1}$ and for every basic open $O$ we have that either $O\cap X_{R_{sm}}=\emptyset$ or $O\cap X_{R_{sm}}$ is uncountable ($X^{2n}$ is second countable). Now for each $m\in \Nat$ and $\alpha\in R_{sm}$, consider the response of the strategy $\phi_\alpha$ when Player I plays successively $(L,t_1),(R,t_1),\ldots,(L,t_{|s|/4+1})$. Denote such a response by $y_\alpha^{sm}$. This finishes the $n$-th step.

In the $(n+1)$-th step, for every $s\in \Nat^{<\Nat}$ such that $|s|=n-2$ and $m\in \Nat$,  since $R_{sm}$ is uncountable and $X^{2n}$ is separable, we can divide $R_{sm}$ into (not necessarily disjoint) countably many uncountable sets $R_{sm1},R_{sm2},\ldots$ such that for each $i\in \Nat$ we have $\mathrm{diam}(\{y_\alpha^{sm}:\alpha\in R_{smi}\})<1/2^{|s|/4+1}$ and for every basic open set $O$ the intersection $O\cap X_{R_{smi}}$ is either empty or uncountable. Let $\psi(smi)\in \Nat$ denote an arbitrary natural number such that $t_{\psi(smi)}$ is within the distance $1/2^{|s|/4+1}$ from the set $\{y_\alpha^{sm}:\alpha\in R_{smi}\}$. This finishes the $(n+1)$-th step.

The $(n+2)$-th, resp. $(n+3)$-th step is similar to the $n$-th, resp. $(n+1)$-th step. The only difference is that Player I plays the right side in his last ($(n/2+2)$-th) round. Briefly, in the $(n+2)$-th step, for every appropriate $s\in \Nat^{<\Nat}$, $m,i\in \Nat$, we divide $R_{smi}$ into (not necessarily disjoint) countably many uncountable sets $R_{smi1},R_{smi2},\ldots$ such that for each $j\in \Nat$ we have $\mathrm{diam}(X_{R_{smij}})<1/2^{|s|/2+2}$ and for every basic open $O$ we have that either $O\cap X_{R_{smij}}=\emptyset$ or $O\cap X_{R_{smij}}$ is uncountable. Then for each $i\in \Nat$ and $\alpha\in R_{smij}$ we consider the response of the strategy $\phi_\alpha$ when Player I plays successively $(L,t_1),(R,t_1),\ldots,(L,t_{|s|/4+1}),(R,t_{|s|/4+1})$. We denote such a response by $x_\alpha^{smij}$. In the next $(n+3)$-th step, we divide each such $R_{smij}$ again into countably many uncountable sets so that for every $k\in \Nat$ we have $\mathrm{diam}(\{x_\alpha^{smij}:\alpha\in R_{smijk}\})<1/2^{|s|/4+2}$. Then $\psi(smijk)\in \Nat$ denote an arbitrary natural number such that $t_{\psi(smijk)}$ is within the distance $1/2^{|s|/4+2}$ from the set $\{x_\alpha^{smij}:\alpha\in R_{smijk}\}$.\\

When the induction is finished, we have produced the $\Nat^{<\Nat}$-indexed collection $(R_s)_{s\in \Nat^{<\Nat}}$ and the partial function $\psi:\Nat^{<\Nat}\rightarrow \Nat$ such that $\psi(s)$, for $s\in \Nat^{<\Nat}$, is defined if and only if $|s|$ is even, such that 
\begin{itemize}
\item for every $s\in \Nat^{<\Nat}$ and basic open $O$ we have that $O\cap X_{R_s}$ is either empty or uncountable
\item for each $v\in \Nat^{\Nat}$ we have that $\bigcap_n \overline{X}_{R_{v\upharpoonright n}}$ is a single pair of tuples $(\vec{a}_v,\vec{b}_v)\in F$
\item for every $s\in \Nat^{<\Nat}$ such that $|s|\equiv 0 \;(\mathrm{mod}\; 4)$ and for every $m,i,j,k\in \Nat$ we have $$\forall \alpha\in R_{smi} (d(t_{\psi(smi)},y_\alpha^{sm})<1/2^{|s|/4+1})$$ and $$\forall \alpha\in R_{smijk} (d(t_{\psi(smijk)},x_\alpha^{smij})<1/2^{|s|/4+2})$$

\end{itemize}

As before, we can then, for every $v\in \Nat^\Nat$, define an autoisometry $\chi_v:X\rightarrow X$ mapping $\vec{a}_v$ onto $\vec{b}_v$ as follows: for any $x\in X$ choose arbitrarily some strictly increasing $\iota:\Nat\rightarrow \Nat$ so that the sequence $(t_{\iota(n)})_n$ is a Cauchy sequence converging to $x$. It then follows, using the same arguments as in Proposition \ref{firststep} and Step 1, that the sequence $(t_{v\upharpoonright 4\iota(n)+3})_n$ is Cauchy as well and we may set $\chi_v(x)$ to be the limit.

It remains to check that every $(\vec{a},\vec{b})\in F$ is of the form $(\vec{a}_v,\vec{b}_v)$ for some $v\in \Nat^{\Nat}$. That follows from the observation that $F=\bigcup_n \overline{X}_{R_n}$ and for every $s\in \Nat^{<\Nat}$ we have $\overline{X}_{R_s}=\bigcup_n \overline{X}_{R_{sn}}$. This finishes the proof of Lemma \ref{firstlem}, which completes the proofs of Proposition \ref{secondstep} and the main theorem.
\end{proof}
\section{Problems}
Let us state few problems here. One of them is to determine the connection between the Scott rank of a countable metric space and its completion. Not surprisingly, the Scott ranks of a countable metric space and its completion may differ.

The proof of the following theorem can be found in \cite{NVT} and is stated as a folklore result there. Recall that a metric space $X$ is ultrahomogeneous if any finite partial isometry between two subspaces of $X$ extends to an isometry of $X$; Friedman et al. in \cite{FFKN} proved that this is equivalent with having the Scott rank $0$.
\begin{thm}\label{NVT_thm}
There is a countable ultrahomogeneous metric space whose completion is not ultrahomogeneous.
\end{thm}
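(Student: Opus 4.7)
The natural strategy is a Fraïssé-style construction. One would choose a class $\mathcal{K}$ of finite metric spaces with rational distances satisfying the hereditary, joint-embedding, and amalgamation properties, apply Fraïssé's theorem to obtain a countable ultrahomogeneous Fraïssé limit $M$, and then show that the metric completion $\hat{M}$ fails to be ultrahomogeneous. The crucial design requirement is that $\mathcal{K}$ be characterized by some combinatorial property $P$ of finite metric spaces that is preserved by amalgamation but \emph{not} preserved under metric limits, so that $\hat{M}$ contains finite sub-configurations not belonging to $\mathcal{K}$.

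To begin, I would fix a candidate property $P$ -- for instance, a genericity condition such as ``no two distinct pairs of points are at equal distance,'' or a condition forbidding certain small metric patterns among triples. Rationality of distances gives flexibility for amalgamation: in any finite amalgamation diagram, there are infinitely many admissible rational choices for the remaining distances, and one can pick values realizing $P$ while avoiding finitely many already-used values. After verifying the three Fraïssé properties for such $\mathcal{K}$, Fraïssé's theorem yields a countable ultrahomogeneous $M$ in which every finite substructure satisfies $P$.

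The second step is to exhibit failure of ultrahomogeneity in $\hat{M}$. Since $P$ is not closed under limits, one can locate in $\hat{M}$ a finite configuration violating $P$, arising as the limit of $M$-sequences. The task is then to produce two isometric tuples $(\vec a, \vec b)$ and $(\vec c, \vec d)$ in $\hat{M}$, yielding a partial isometry, such that any global autoisometry extending it would force the existence of a further point $p' \in \hat{M}$ realizing a prescribed distance pattern that cannot be approximated by any Cauchy sequence in $M$ (because at the countable level only $P$-respecting configurations occur). The existence of that obstruction point on one side but not the other is what breaks ultrahomogeneity.

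The principal obstacle lies in selecting the right $P$: one needs amalgamation to go through for $\mathcal{K}$ while simultaneously arranging that the relevant limit configurations genuinely cannot be matched by any autoisometry of $\hat{M}$. Most elementary restrictions either destroy amalgamation -- so $M$ is not ultrahomogeneous -- or are respected by metric limits, so $\hat{M}$ remains ultrahomogeneous (as happens with the full rational Urysohn space). A successful choice will likely require an intricate condition linking several distance relations across a bounded number of points, and verifying both amalgamation in $\mathcal{K}$ and non-extendability of the chosen partial isometry in $\hat{M}$ will be the technical heart of the argument. If the Fraïssé route proves too delicate, an alternative is to build $M$ directly by a back-and-forth construction that enforces ultrahomogeneity of $M$ at every finite stage while deliberately planting two Cauchy sequences whose limits in $\hat{M}$ generate a rigid, non-interchangeable pair.
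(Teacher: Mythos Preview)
The paper does not actually prove this theorem: it is stated without proof in the final ``Problems'' section and attributed to P.~Schlicht. There is therefore no argument in the paper against which to compare your proposal.

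As for the proposal itself, it is a plausible research plan rather than a proof. You correctly identify the Fra\"iss\'e framework as the natural tool and isolate the key tension---finding a property $P$ of finite rational metric spaces that survives amalgamation yet fails in the limit---but you do not commit to a specific $P$, and you explicitly flag the choice of $P$ and the verification of non-extendability in $\hat M$ as unresolved obstacles. The candidate you mention (``no two distinct pairs at equal distance'') does not obviously work: equal distances can certainly arise as limits, but you would still need to exhibit a concrete partial isometry of $\hat M$ that fails to extend, and nothing in the sketch does this. Until a definite class $\mathcal K$ is fixed and both the amalgamation property for $\mathcal K$ and the failure of ultrahomogeneity in $\hat M$ are checked, what you have is an outline of where the proof should live, not the proof itself.
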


On the other hand, it is a trivial observation that the Scott rank can decrease after the completion. Just consider any non-ultrahomogeneous countable dense subset of an ultrahomogeneous Polish metric space.

However, it is still unclear whether the rank of the completion of a countable metric space can increase arbitrarily after the completion or there is some bound. Note that the latter case would give another proof that the Scott rank of a Polish metric space is countable (assuming the bound is reasonable).

In the questions below, for a countable metric space $X$ we denote by $\overline{X}$ the metric completion.
\begin{question}[Rubin]
Does there exist a function $F:\omega_1\rightarrow \omega_1$ such that for any countable metric space $X$ we have $\mathrm{sr}(\overline{X})\leq F(\mathrm{sr}(X))$?
\end{question}

If the answer were negative, then perhaps the natural way how to show it would be to answer the following question. The positive answer would also generalize Theorem \ref{NVT_thm}.
\begin{question}[Schlicht]
Does there exist, for every countable ordinal $\alpha$, an ultrahomogeneous countable metric space $X_\alpha$ such that $\mathrm{sr}(\overline{X}_\alpha)\geq \alpha$?
\end{question}

When given a concrete Polish metric space the computation of its Scott rank seems to be difficult in general. We address this problem precisely in the next question.
\begin{question}[Zapletal]
Fix a countable ordinal $\alpha$ and a Polish metric space $X$. What is the descriptive set-theoretic complexity of the equivalence relation $\equiv_\alpha$ on $X^m$, for $m\in \Nat$?
\end{question}
Straightforward computation gives that it is at most $\Pi^1_{\alpha+\omega}$ ($\Pi^1_{2n}$ if $\alpha=n\in \omega$). Perhaps, one could hope for better estimates.\\

Since the completness was used essentially in the proof of Theorem \ref{maintheorem}, one can ask whether incomplete spaces can have higher ranks.
\begin{question}[Rubin]
Does there exist a separable metric space $X$ such that $\mathrm{sr}(X)$ is uncountable?
\end{question}

\bigskip

Let us mention the last problem we are interested in here. It is related to the fact mentioned in the introduction that there are Polish metric spaces whose isometry class cannot be described in a Borel way by an $L_{\omega_1 \omega}$ sentence. What are the proper subclasses of the class of all Polish metric spaces in within the isometry classes can be so described? Do these subclasses coincide with the subclasses classifiable by countable structures?

\bigskip

\noindent {\bf Acknowledgment.} The author would like to thank to Matatyahu Rubin and Philipp Schlicht for introducing him into the subject and very fruitful discussions. The author is also grateful to Andr\' e Nies for helpful comments and suggestions.


\begin{thebibliography}{6}
\bibitem{CGK}
J. Clemens, S. Gao, A. Kechris, \emph{Polish metric spaces: their classification and isometry groups}, Bull. Symbolic Logic 7 (2001), no. 3, 361-375
\bibitem{FFKN}
S. Friedman, E. Fokina, M. Koerwien, A. Nies, \emph{Scott analysis of Polish spaces}, preprint
\bibitem{Gao}
S. Gao, \emph{Invariant Descriptive Set Theory}, CRC Press, 2009
\bibitem{Ho}
W. Hodges, \emph{Model theory. Encyclopedia of Mathematics and its Applications, 42}, Cambridge University Press, Cambridge, 1993
\bibitem{Je}
T. Jech, \emph{Set theory. The third millennium edition, revised and expanded}, Springer Monographs in Mathematics. Springer-Verlag, Berlin, 2003
\bibitem{Ke}
H. J. Keisler, \emph{Model Theory for Infinitary Logic: Logic with Countable Conjunctions and Finite Quantifiers}, Studies in Logic and the Foundations of Mathematics, Vol. 62. North-Holland Publishing Co., Amsterdam-London, 1971.
\bibitem{NVT}
L. Nguyen Van Th\' e, \emph{Structural Ramsey theory of metric spaces and topological dynamics of isometry groups}, Mem. Amer. Math. Soc. 206 (2010), no. 968
\bibitem{Ni}
A. Nies, \emph{The complexity of similarity relations for Polish metric spaces}, talk given during the Universality and Homogeneity Trimester, Hausdorff Institute for Mathematics, Bonn (slides available at http://dl.dropbox.com/u/370127/talks/2013/Nies\_HIM\_PolishSpaces.pdf)
\bibitem{Sc}
D. Scott, \emph{Logic with denumerably long formulas and finite strings of quantifiers}, Theory of Models (Proc. 1963 Internat. Sympos. Berkeley), 329--341, North-Holland, Amsterdam, 1965 
\end{thebibliography}
\end{document}